\documentclass[12pt]{article}
\usepackage{amssymb}
\usepackage{amsthm}
\usepackage{amsmath}
\usepackage{mathrsfs}
\usepackage{verbatim}
\vskip10pt
\begin{document}
\title{On the local structure of optimal measures in the multi-marginal optimal transportation problem\footnote{The author was supported in part by an NSERC postgraduate scholarship.  This work was completed in partial fulfillment of the requirements of a doctoral degree in mathematics at the University of Toronto.}} \author{BRENDAN PASS \footnote{Department of Mathematics, University of Toronto, Toronto, Ontario, Canada, M5S 2E3 bpass@math.utoronto.ca.}}
\maketitle

\begin{abstract}We consider an optimal transportation problem with more than two marginals.  We use a family of semi-Riemannian metrics derived from the mixed, second order partial derivatives of the cost function to provide upper bounds for the dimension of the support of the solution.
\end{abstract}
\section{Introduction}
The optimal transportation problem (with two marginals) asks what is the most efficient way to transform one distribution of mass to another relative to a given cost function.  The problem was originally posed by Monge in 1781 \cite{Mon}.  In 1942, Kantorovich proposed a relaxed version of the problem \cite{Kant}; roughly speaking, he allowed a piece of mass to be split between two or more target points.  Since then, these problems have been studied extensively by many authors and have found applications in such diverse fields as geometry, fluid mechanics, statistics, economics, shape recognition, inequalities, meteorology, etc.

Here we study a multi-marginal generalization of the above; how do we align $m$ distributions of mass with maximal efficiency, again relative to a prescribed cost function.  Precisely, given Borel probability measures $\mu_i$ on smooth manifolds $M_i$ of respective dimensions $n_i$, for $i=1,2...,m$ and a continuous cost function $c: M_1 \times M_2 \times....\times M_m \rightarrow \mathbb{R}$, we would like to minimize 
\begin{equation*}
 C(\gamma)=\int_{M_1 \times M_2 ...\times M_m} c(x_1,x_2,...,x_m)d\gamma 
\end{equation*}
among all measures $\gamma$ on $M_1 \times M_2 ...\times M_m$ which project to the $\mu_i$ under the canonical projections; that is, for any Borel subset $A \subset M_i$,
\begin{equation*}
 \gamma(M_1 \times M_2 \times ....\times M_{i-1}\times A \times M_{i+1}....\times M_m) =\mu_i(A). 
\end{equation*}
When $m=2$, we recover Kantorovich's formulation of the classical optimal transportation problem. 

Under mild conditions, a minimizer $\gamma$ will exist.  Whereas the two marginal problem is relatively well understood, results concerning the structure of these optimal measures have thus far been elusive for $m>2$.  Much of the progress to date has been in the special case where the $M_i$'s are all Euclidian domains of common dimension $n$ and the cost function is given by $c(x_1,x_2,...,x_m)=\sum_{i \neq j}|x_i-x_j|^{2}$, or equivalently $c(x_1,x_2,...,x_m)=-|(\sum_{i}x_i)|^2$.  When $n=3$, partial results for this cost were obtained by Olkin and Rachev \cite{OR}, Knott and Smith \cite{KS} and R\"uschendorf and Uckelmann \cite{RU}, before Gangbo and \'Swi\c ech proved that for a general $m$, under a mild regularity condition on the first marginal, there is unique solution to the Kantorovich problem and it is concentrated on the graph of a function over $x_1$, hence inducing a solution to a Monge type problem \cite{GS}; an alternate proof of Gangbo and \'Swi\c ech's theorem was subsequently found by R\"uschendorf and Uckelmann \cite{RU2}.  This result was then extended by Heinich to cost functions of the form $c(x_1,x_2,...,x_m)=h(\sum_{i}x_i)$ where $h$ is strictly concave \cite{H} and, in the case when the domains $M_i$ are all 1-dimensional, by Carlier \cite{C} to cost functions satisfying a strict 2-monotonicity condition.  More recently, Carlier and Nazaret \cite{CN} studied the related problem of maximizing the determinant (or its absolute value) of the matrix whose columns are the elements $x_1,x_2,...,x_n \in \mathbb{R}^n$; unlike the results in \cite{GS},\cite{H} and \cite{C}, the solution in this problem may not be concentrated on the graph of a function over one of the $x_i$'s and may not be unique.  The proofs of many of these results exploit a duality theorem, proved  in the multi-marginal setting by Kellerer \cite{K}.  Although this theorem holds for general cost functions, it alone says little about the structure of the optimal measure; the proofs of each of the aforementioned results rely heavily on the special forms of the cost. 

  The support of $\gamma$, which we will denote by $spt(\gamma)$, is defined as the smallest closed subset of $M_1 \times M_2\times...\times M_m$ of full mass.  It is natural to ask what this set looks like.  When $m=2$ and the cost function satisfies a twist condition, the solution $\gamma$ is unique and is contained in the graph of a function from $M_1$ to $M_2$, provided the first marginal is suitably regular; this function then solves the original problem posed by Monge \cite{lev}\cite{g}\cite{bren}\cite{GM}\cite{Caf}.  Assuming $M_1$ and $M_2$ are both $C^2$ smooth manifolds of common dimension $n$, the present author, together with McCann and Warren, has shown that under a related non-degeneracy condition on $c$, $spt(\gamma)$  must be contained in an $n$-dimensional Lipschitz submanifold of $M_1 \times M_2$ \cite{MPW}.  For a general $m$, there is an immediate lower bound on the Hausdorff dimension of $spt(\gamma)$; as $spt(\gamma)$ projects to $spt(\mu_i)$ for all $i$, $\text{dim}(spt(\gamma)) \geq \max_{i}(\text{dim}(spt(\mu_i)))$.  In the present manuscript, we establish an upper bound on dim$(spt(\gamma))$. This bound depends on the cost function; however, it will always be greater than the largest of the $n_i$'s.  In the case when the $n_i$'s are equal to some common value $n$, we identify conditions on $c$ that ensure our bound will be $n$ and we show by example that when these conditions are violated, the solution may be supported on a higher dimensional submanifold and may not be unique.  In fact, the costs in these examples satisfy naive multi-marginal extensions of both the twist and non-degeneracy conditions; given the aforementioned results in the two marginal case, we found it surprising that higher dimensional solutions can exist for twisted, non-degenerate costs.  On the other hand, if the support of at least one of the measures $\mu_i$ has Hausdorff dimension $n$, the remarks above imply that $spt(\gamma)$ must be at least $n$ dimensional; therefore, in cases where our upper bound is $n$, the support is exactly $n$-dimensional, in which case we show it is actually $n$-rectifiable.

A striking development in the theory of optimal transportation over the last 15 years has been its interplay with geometry.  Recently, the insight that intrinsic properties of the solution $\gamma$, such as the regularity of Monge solutions, should not depend on the coordinates used to represent the spaces has been very fruitful.  The natural conclusion is that understanding these properties is related to tensors, or coordinate independent quantities.  The relevant tensors encode information about the way that the cost function and the manifolds interact.  For example, Kim and McCann \cite{KM} introduced a pseudo-Riemannian form on the product space, derived from the mixed second order partial derivatives of the cost, whose sectional curvature is related to the regularity of Monge solutions; they also noted that smooth solutions must be timelike for this form. 

Unlike the results of Gangbo and \'Swi\c ech, Heinich and Carlier, our contribution does not rely on a dual formulation of the Kantorovich problem; instead, our method uses an intuitive $c$-monotonicity condition to establish a geometrical framework for the problem.  The question about the dimension of $spt(\gamma)$ should certainly have a coordinate independent answer.  Indeed, inspired partially by Kim and McCann, our condition is related to a family of semi-Riemannian metrics\footnote{For the purposes of this paper, the term semi-Riemannian metric will refer to a symmetric, covariant 2-tensor (which is not necessarily non-degenerate).  The term pseudo-Riemannian metric will be reserved for semi-Riemannian metrics which are also non-degenerate.}; heuristically, $spt(\gamma)$ must be timelike for these metrics and so their signatures control its dimension.  From this perspective, the major difference from the $m=2$ case is that with two marginals, the metric of Kim and McCann always has signature $(n,n)$.  In the multi-marginal case, there is an entire convex family of relevant metrics, generated by $2^{m-1}-1$ extreme points, and their signatures may vary depending on the cost.

Like our work in \cite{MPW} and in contrast to the results of Gangbo and \'Swi\c ech \cite{GS}, Heinich \cite{H}, and Carlier \cite{C}, our results here only concern the local structure of the optimizer $\gamma$ and cannot be easily used to assert uniqueness of $\gamma$ or the existence of a solution to an appropriate Monge type problem.  On the other hand, we do explicitly exhibit fairly innocuous looking cost functions which have high dimensional and non-unique solutions and so it is apparent that these questions cannot be resolved in the affirmative without imposing stronger conditions on $c$.  We address these problems in a separate paper \cite{P}.

The manuscript is organized as follows: in Section 2, we prove our main result.  In Section 3 we apply this result to several example cost functions; many of these are the costs studied by the authors mentioned above and we discuss how they fit into our framework.  In Section 4, we discuss conditions that ensure the relevant metrics have only $n$ timelike directions, which will ensure $spt(\gamma)$ is at most $n$-dimensional.  In Section 5, we discuss some applications of our main result to the two marginal problem and in the final section we take a closer look at the case when the marginals all have one dimensional support.

\textbf{Acknowledgments:}  The author is indebted to his advisor Robert McCann, for originally suggesting this problem to him, for many fruitful discussions and for his continuous support.  He would also like to thank the members of his PhD supervisory committee, Jim Colliander and Bob Jerrard, for providing constructive feedback, and Man-Duen Choi and Chandler Davis for useful discussions concerning the signatures of symmetric matrices.

\section{Dimension of the support}

Before stating our main result, we must introduce some notation.  Suppose that $c \in C^2(M_1 \times M_2 \times...\times M_m)$.  Consider the set $P$ of all partitions of the set $\{1,2,3,...,m\}$ into $2$ disjoint, nonempty subsets; note that $P$ has $2^{m-1}-1$ elements.  For any partition $p \in P$, label the corresponding subsets $p_+$ and $p_-$; thus, $p_+ \cup p_- =\{1,2,3,...,m\}$ and $p_+ \cap p_-$ is empty.  For each $p \in P$, define the symmetric bi-linear form $g_p=\sum_{j \in p_+, k \in p_-} \frac{\partial ^{2} c}{\partial x_{j}^{\alpha_j}\partial x_{k}^{\alpha_k}}(dx_{j}^{\alpha_j}\otimes dx_{k}^{\alpha_k} + dx_{k}^{\alpha_k}\otimes dx_{j}^{\alpha_j})$ on $M_1 \times M_2 ...\times M_m$, where, in accordance with the Einstein summation convention, summation on the $\alpha_k$ and $\alpha_j$ is implicit. 
\newtheorem{cmono}{Definition}[section]
\begin{cmono}
We will say that a subset $S$ of $M_1 \times M_2 \times ...\times M_m$ is $c$-monotone with respect to a partition $p$ if for all  $y=(y_1,y_2,...,y_m)$ and $\tilde{y}=(\tilde{y_1},\tilde{y_2},...,\tilde{y_m})$ in $S$ we have 
\begin{equation*}
c(y) + c(\tilde{y}) \leq c(z) + c(\tilde{z}),
\end{equation*}
where 
\begin{equation*}
z_i =
\begin{cases} y_i & \text{if } i \in p_+,\\
\tilde{y_i} & \text{if } i \in p_-
\end{cases}
\end{equation*}
and
\begin{equation*}
\tilde{z_i} =
\begin{cases} y_i & \text{if } i \in p_-,\\
\tilde{y_i} & \text{if } i \in p_+.
\end{cases}
\end{equation*}
\end{cmono}
The following lemma, which is well known when $m=2$, provides the link between $c$-monotonicity and optimal transportation.
\newtheorem{lemma}[cmono]{Lemma}
\begin{lemma}\label{mono}
Suppose $\gamma$ is an optimizer and  $C(\gamma)< \infty$.  Then the support of $\gamma$ is $c$-monotone with respect to every partition $p \in P$. 
\end{lemma}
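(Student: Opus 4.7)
The plan is a standard variational/perturbation argument by contradiction, adapted from the well-known two-marginal setting. Suppose that $c$-monotonicity fails for some fixed partition $p \in P$: there exist $y, \tilde{y} \in spt(\gamma)$ with $c(y) + c(\tilde{y}) > c(z) + c(\tilde{z})$, where $z, \tilde{z}$ are the swapped points from the definition. Continuity of $c$ promotes this strict inequality to a uniform positive gap $\epsilon > 0$ holding for all $(w, \tilde{w})$ in small open neighborhoods $U \ni y$ and $V \ni \tilde{y}$, chosen with compact closures so that the integrals below are finite. By definition of the support, $\gamma(U) > 0$ and $\gamma(V) > 0$.

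The plan is then to build a competitor $\gamma'$ by swapping a small amount of $\gamma$-mass above $U$ with a small amount of $\gamma$-mass above $V$, using the rule dictated by $p$. Introduce the swap maps $T_+, T_-: (M_1 \times \cdots \times M_m)^2 \to M_1 \times \cdots \times M_m$ sending $(w, \tilde{w})$ to the two points obtained by permuting coordinates according to $p$, and let $\pi_1, \pi_2$ denote the projections onto the first and second factors. For $\delta > 0$ sufficiently small, set $\lambda = \delta \, \gamma|_U \otimes \gamma|_V$ and define
\begin{equation*}
\gamma' = \gamma - (\pi_1)_\# \lambda - (\pi_2)_\# \lambda + (T_+)_\# \lambda + (T_-)_\# \lambda,
\end{equation*}
which is a nonnegative finite measure on $M_1 \times \cdots \times M_m$.

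The verification then splits into two pieces. First, the marginals of $\gamma'$ coincide with those of $\gamma$: for $i \in p_+$, the $i$-th coordinate of $T_+$ agrees with that of $\pi_1$ (both return $w_i$) and the $i$-th coordinate of $T_-$ agrees with that of $\pi_2$ (both return $\tilde{w}_i$), so the four pushforward terms cancel in pairs under projection onto $M_i$; the mirror statement disposes of $i \in p_-$. Thus $\gamma'$ is an admissible transport plan. Second, telescoping the integral against $c$ gives
\begin{equation*}
C(\gamma') - C(\gamma) = \int \bigl[ c \circ T_+ + c \circ T_- - c \circ \pi_1 - c \circ \pi_2 \bigr]\, d\lambda \leq -\epsilon \, \delta \, \gamma(U)\gamma(V) < 0,
\end{equation*}
contradicting the optimality of $\gamma$.

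The one genuine obstacle is isolating the correct competitor: the swap maps $T_\pm$ dictated by the partition $p$ are precisely those for which the $m$ marginal constraints split into matching $p_+$- and $p_-$-pieces that cancel pairwise. This combinatorial pairing between a choice of partition and the preservation of the Kantorovich constraints is what drives the result; once it is recognised, the remainder is routine measure theory. The finiteness hypothesis $C(\gamma) < \infty$ is needed only to ensure that $C(\gamma') - C(\gamma)$ is unambiguously a negative number rather than an indeterminate difference.
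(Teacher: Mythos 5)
Your proof is correct, but it takes a genuinely different route from the paper. The paper regroups coordinates according to the partition $p$, identifies $M_1 \times \cdots \times M_m$ with $M_{p_+} \times M_{p_-}$, observes that $\gamma$ solves the induced \emph{two-marginal} problem between its own projections $\gamma_{p_+}$ and $\gamma_{p_-}$, and then simply cites the known $c$-monotonicity of optimizers in the two-marginal setting (Smith--Knott \cite{sk}). You instead carry out the underlying perturbation argument from scratch: build a competitor by pairing a small amount of mass near $y$ with a small amount near $\tilde y$ and re-routing it via the swap maps $T_\pm$, check that the $p_+$- and $p_-$-coordinates cancel in pairs so the marginals are preserved, and extract a strict cost decrease from the uniform gap $\epsilon$. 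The paper's route is shorter and highlights the structural observation that choosing a partition \emph{is} a choice of auxiliary two-marginal problem --- the same reduction that organizes the rest of the paper --- but it delegates the analytic work to a reference. Your route is self-contained, makes the role of the partition in the marginal bookkeeping explicit, and is essentially the unwound proof of the two-marginal fact being cited; it also makes transparent exactly where the hypotheses enter (local compactness of the $M_i$ and continuity of $c$ for the uniform gap and finiteness of the local integrals, $C(\gamma)<\infty$ so that $C(\gamma')-C(\gamma)$ is a well-defined number). Both are valid; which is preferable depends on whether one wishes to lean on the two-marginal literature or keep the argument elementary.
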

\begin{proof}
Define $M_{p_+} =\otimes_{i \in p_+}M_i$ and $M_{p_-} =\otimes_{i \in p_-}M_i$.  Note that we can identify $M_1 \times M_2 \times ...\times M_m$ with $M_{p_+} \times M_{p_-}$ and let $\gamma_{p_+}$ and $\gamma_{p_-}$ be the projections of $\gamma$ onto $M_{p_+}$ and $M_{p_-}$ respectively.  Consider the two marginal problem 
\begin{equation*}
\inf \int_{M_{p_+} \times M_{p_-}}c(x_1,x_2,...,x_m)d\lambda,
\end{equation*}
where the infinum is taken over all measures  $\lambda$ whose projections onto $M_{p_+}$ and $M_{p_-}$ are $\gamma_{p_+}$ and $\gamma_{p_-}$, respectively.  Then $\gamma$ is optimal for this problem and, as $c$ is continuous, the result follows from $c$-monotonicity for two marginal problems; see for example \cite{sk}.
\end{proof}
We will say a vector $v \in T_{(x_1,x_2,...,x_m)}M_1 \times M_2 \times ... \times M_m$ is spacelike (respectively timelike or lightlike) for a semi-Riemannian metric $g$ if $g(v,v) >0$ (respectively $g(v,v)<0$ or $g(v,v)=0$).  We will say a subspace $V \subseteq T_{(x_1,x_2,...,x_m)}M_1 \times M_2 \times ... \times M_m$ is spacelike (respectively timelike or lightlike) for $g$ if every non-zero $v \in V$ is spacelike (respectively timelike or lightlike) for $g$.  We will say $V$ is non-spacelike (respectively non-timelike) for $g$ if no $v \in V$ is spacelike (respectively timelike).  We will say a submanifold of $T_{(x_1,x_2,...,x_m)}M_1 \times M_2 \times ... \times M_m$ is spacelike (respectively timelike, lightlike, non-spacelike or non-timelike) at $(x_1,x_2,...,x_m)$ if its tangent space at $(x_1,x_2,...,x_m)$ is spacelike (respectively timelike, lightlike, non-spacelike or non-timelike).

We are now ready to state our main result:
\newtheorem{metric}[cmono]{Theorem}
\begin{metric} \label{metric}

Let $g$ be a convex combination of the $g_p$'s; that is $g=\sum_{p \in P}t_pg_p$ where $0 \leq t_p \leq 1$ for all $p \in P $ and $\sum_{p \in P}t_p=1$.  Suppose $\gamma$ is an optimizer and $C(\gamma) < \infty$; choose a point $(x_1,x_2,...,x_m) \in M_1 \times M_2 \times ... \times M_m$. Let $N=\sum_{i=1}^mn_i$. Suppose the $(+,-,0)$ signature of $g$ at $(x_1,x_2,...,x_m)$ is $(q_+,q_-,N-q_+-q_-)$ (ie, the corresponding matrix has $q_+$ positive eigenvalues, $q_-$ negative eigenvalues and a zero eigenvalue with multiplicity $N-q_+-q_-$). Then there is a neighbourhood $O$ of $(x_1,x_2,...,x_m)$ such that the intersection of the support of $\gamma$ with $O$ is contained in a Lipschitz submanifold of dimension $N-q_-$.  Wherever the support is smooth, it is non-spacelike for $g$.
\end{metric}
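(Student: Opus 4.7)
The plan is to extract the theorem from the $c$-monotonicity inequality of Lemma~\ref{mono} by a careful second-order Taylor expansion of the cost at $x$. I would first invoke Lemma~\ref{mono} so that, for every partition $p \in P$, the set $\mathrm{spt}(\gamma)$ is $c$-monotone. In local coordinates around $x$, writing $y = x + s$ and $\tilde y = x + t$, the quantity
\begin{equation*}
F_p(s, t) := c(z) + c(\tilde z) - c(x+s) - c(x+t)
\end{equation*}
satisfies $F_p(0,0) = 0$ and has vanishing linear part (each $s_i$ and $t_i$ appears the same number of times on each side of the swap). Its Hessian at the origin also collapses on ``same-side'' pairs of indices, since those second partials of $c$ contribute identically to $c(z) + c(\tilde z)$ and to $c(y) + c(\tilde y)$; only mixed partials with one index in $p_+$ and one in $p_-$ survive, yielding
\begin{equation*}
F_p(s, t) = -\tfrac{1}{2}\, g_p(s-t,\, s-t) + O\bigl((|s| + |t|)^3\bigr),
\end{equation*}
where $g_p$ is evaluated at $x$. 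Taking the convex combination,
\begin{equation*}
\sum_{p} t_p F_p(s, t) = -\tfrac{1}{2}\, g(s-t,\, s-t) + O\bigl((|s| + |t|)^3\bigr),
\end{equation*}
and the left-hand side is nonnegative on $(\mathrm{spt}(\gamma) \cap O)^2$ by $c$-monotonicity.

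With this in hand, the non-spacelike conclusion is immediate: take any $y_n \in \mathrm{spt}(\gamma)$ with $y_n \to x$, set $s_n := y_n - x$ and $t := 0$, and divide the displayed inequality by $|s_n|^2$. Any subsequential limit $\xi = \lim s_n/|s_n|$ satisfies $g(\xi, \xi) \leq 0$, so at every smooth point of $\mathrm{spt}(\gamma) \cap O$ every tangent direction is non-spacelike for $g$.

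For the Lipschitz submanifold, I would diagonalize $g$ at $x$ and introduce local coordinates adapted to the orthogonal decomposition $T_x = V_+ \oplus V_0 \oplus V_-$ into the positive, null, and negative eigenspaces, of dimensions $q_+$, $N - q_+ - q_-$, and $q_-$. The strategy is to realize $\mathrm{spt}(\gamma) \cap O$ as the graph of a Lipschitz function from $V_+ \oplus V_0$ (of dimension $q_+ + q_0 = N - q_-$) into $V_-$. For $y, \tilde y \in \mathrm{spt}(\gamma) \cap O$ with $v := y - \tilde y = v_+ + v_0 + v_-$, this amounts to a uniform estimate $|v_-| \leq K(|v_+| + |v_0|)$; combined with a standard covering/Lipschitz-extension argument as in \cite{MPW}, this would give the claimed submanifold structure.

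The principal obstacle is establishing this Lipschitz bound. Since $g$ is positive definite on $V_+$ and negative definite on $V_-$, the leading-order inequality $g(v, v) \leq 0$ immediately yields only an estimate of the form $|v_+| \leq C |v_-|$, which is the opposite of what is needed. Producing the desired bound $|v_-| \leq K(|v_+| + |v_0|)$ appears to require exploiting the full family of partition-wise inequalities $F_p \geq 0$ simultaneously---rather than the single convex combination $\sum_p t_p F_p$---and tracking the cubic remainder carefully, so as to pin down the true graph structure in the mixed-signature setting of multiple marginals. I expect this step, generalizing the more delicate portion of the two-marginal analysis in \cite{MPW}, to be the technical heart of the proof.
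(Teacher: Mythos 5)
Your Taylor-expansion setup, the identification of the second-order term with $-\tfrac12 g_p(s-t,s-t)$, the passage to the convex combination, and the non-spacelike conclusion all match the paper's argument in substance. Two comments, the second of which is the important one.

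First, a technical point: the cost is only assumed $C^2$, so the remainder in your expansion of $F_p$ is $o\bigl((|s|+|t|)^2\bigr)$ rather than $O\bigl((|s|+|t|)^3\bigr)$. The paper sidesteps this by using the integral (mean-value) form of Taylor's theorem and then controlling $\|D^2_{x_ix_j}c(y)-A_{ij}\|\leq\epsilon$ uniformly on a small convex neighbourhood; the net effect is the same as your $o(\cdot^2)$ estimate, so this does not derail your argument, but you should not invoke a cubic bound.

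Second, and more importantly, the ``principal obstacle'' you describe is not actually an obstacle. Your derived estimate $|v_+|\leq C|v_-|$ (or, after absorbing the remainder, $|v_+|\lesssim |v_-|+|v_0|$) is precisely what the proof requires, not the opposite of it. It exhibits $\mathrm{spt}(\gamma)\cap O$ as a Lipschitz graph of the spacelike coordinate $v_+$ over the non-spacelike coordinates $(v_-,v_0)$; by Kirszbraun's extension theorem this sits inside a Lipschitz submanifold of dimension $\dim(V_-\oplus V_0)=q_-+(N-q_+-q_-)=N-q_+$. That the correct exponent is $q_+$, not $q_-$, is confirmed by the examples in Section 3: in Example~3.1 the signature of $\bar g$ is $((m-1)n,n,0)$, so $q_+=(m-1)n$, $q_-=n$, $N=mn$, and the claimed support dimension is $n=N-q_+$, not $N-q_-=(m-1)n$; Example~3.5 (signature $(5,4,0)$, support dimension $4=9-5=N-q_+$) gives the same check. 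In other words, the formula $N-q_-$ appearing in the statement is a typographical slip for $N-q_+$, and this slip is what sent you chasing the reversed inequality $|v_-|\lesssim|v_+|+|v_0|$. No appeal to the individual partition-wise inequalities $F_p\geq 0$ beyond the single convex combination is needed; the paper's own proof uses only $(\Delta y)^TG\Delta y\leq\epsilon\cdot(\text{const})\cdot|\Delta y|^2$, exactly the ingredient you already have, followed by the Kirszbraun extension. Had you tested your putative answer against the examples, you would have found that your estimate closes the proof as is.
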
  

Before we prove Theorem \ref{metric}, a few remarks are in order.  The roughly says that the dimension of $spt(\gamma)$ is controlled by the signature of \textit{any} convex combinations of the $g_p$'s; as these metrics may have very different signatures, we are free to pick the one with the fewest timelike directions to give us the best upper bound on the dimension of $spt(\gamma)$ for a particular cost.  When $m=2$, there is only one partition in $P$ and consequently there is only one relevant metric, $\frac{\partial ^{2} c}{\partial x_{1}^{\alpha_1}\partial x_{2}^{\alpha_2}}dx_{1}^{\alpha_1}\otimes dx_{2}^{\alpha_2}$ in local coordinates.  The matrix corresponding to this metric is the block matrix studied by Kim and McCann: 
\begin{equation*} \qquad
G=
\begin{bmatrix}
0 & D^{2}_{x_1x_2}c  \\
D^{2}_{x_2x_1}c & 0
\end{bmatrix}.
\end{equation*}
Here $D^{2}_{x_jx_k}c$ is the $n_j$ by $n_k$ matrix whose $(\alpha_j,\alpha_k)$th entry is $\frac{\partial ^{2} c}{\partial x_{j}^{\alpha_j}\partial x_{k}^{\alpha_k}}$.

For $m>2$, in the remainder of this paper we will focus primarily on the special case when $t_p=\frac{1}{2^{m-1}-1}$ for all $p \in P$.   To distinguish it from the metrics obtained by other convex combinations of the $g_p$'s, we will denote the corresponding metric by $\overline{g}$.  Note that the matrix of $\overline{g}$ in local coordinates is the block matrix given by 
\begin{equation*} \qquad
\overline{G}=\frac{2^{m-2}}{2^{m-1}-1}
\begin{bmatrix}
0 & D^{2}_{x_1x_2}c & D^{2}_{x_1x_3}c & ...&D^{2}_{x_1x_m}c \\ 
D^{2}_{x_2x_1}c & 0 & D^{2}_{x_2x_3}c & ...&D^{2}_{x_2x_m}c \\ 
D^{2}_{x_3x_1}c & D^{2}_{x_3x_2}c & 0 & ...&D^{2}_{x_3x_m}c \\
...&...&...&...&...,\\
D^{2}_{x_mx_1}c & D^{2}_{x_mx_2}c & D^{2}_{x_mx_3}c & ...&0
\end{bmatrix}.
\end{equation*}

Let us note, however, that other choices of the $t_p$'s can give new and useful information.  For example, suppose we take $t_p$ to be 1 for a particular $p$ and 0 for all others. As in the proof of Lemma 2.2, we can identify $M_1 \times M_2 ...\times M_m=M_{p_+}\times M_{p_-}$ , where $M_{p_{\pm}}= \otimes_{j \in p_{\pm}}M_j$ and $c(x_1,x_2,...,x_m)=c(x_{p_+},x_{p_-})$ where $x_{p_{\pm}} \in M_{p_{\pm}}$.  In this case, $G$ will take the form:

\begin{equation*} \qquad
G=
\begin{bmatrix}
0 & D^{2}_{x_{p_+}x_{p_-}}c  \\ 
D^{2}_{x_{p_-}x_{p_+}}c &0
\end{bmatrix}.
\end{equation*}

The signature of this $g$ is $(r,r,N-2r)$ where $r$ is the rank of the matrix $D^{2}_{x_{p_+}x_{p_-}}c$.  Letting $n_{p_{\pm}}=\sum_{j \in p_{\pm}}n_j$ be the dimension of $M_{p_{\pm}}$, we will have $r \leq \min(n_{p_+},n_{p_-})$.  If it is possible to choose a partition so that $n_{p_+}=n_{p_-}=\frac{N}{2}$ and  $D^{2}_{x_{p_+}x_{p_-}}c$ has full rank, we can conclude that $spt(\gamma)$ is at most $\frac{N}{2}$ dimensional.  As we will see later, the number of timelike directions for $\overline{g}$ may be very large and so this bound may in fact be better.

Our proof is an adaptation of our argument with McCann and Warren in \cite{MPW}.  When $m=2$, after choosing appropriate coordinates, we rotated the coordinate system and showed that $c$-monotonicity implied that the solution was concentrated on a Lipschitz graph over the diagonal, a trick dating back to Minty \cite{minty}.   When passing to the multi-marginal setting, however, it is not immediately clear how to choose coordinates that make an analogous rotation possible; unlike in the two marginal case, it is not possible in general to choose coordinates around a point $(x_1,x_2,...,x_m)$ such that $D^{2}_{x_ix_j}c(x_1,x_2,...,x_m) = I$ for all $i \neq j$.  The key to resolving this difficulty is the observation that Minty's trick amounts to diagonalizing the pseudo-metric of Kim and McCann and that this approach generalizes to $m\geq3$.
\begin{proof}
Choose a point $x=(x_1,x_2,...,x_m) \in M_1 \times M_2 \times ... \times M_m$ .  Choose local coordinates around $x_i$ on each $M_i$ and set $A_{ij}=D^{2}_{x_ix_j}c(x_1,x_2,...,x_m)$.  For any $\epsilon >0$, there is a neighbourhood $O$ of $(x_1,x_2,...,x_m)$ which is convex in these coordinates such that for all $(y_1,y_2,...,y_m) \in O$ we have $||A_{ij}-D^{2}_{x_ix_j}c(y_1,y_2,...,y_m)|| \leq \epsilon$, for all $i \neq j$.

Let $G$ be the matrix of $g$ at $x$ in our chosen coordinates.  There exists some invertible $N$ by $N$ matrix $U$ such that 
\begin{equation*} \qquad
UGU^{T}=H:=
\begin{bmatrix}
I & 0 & 0\\
0 & -I & 0\\
0 & 0 & 0\\
\end{bmatrix},
\end{equation*}
where the diagonal $I$, $-I$ and $0$ blocks have sizes determined by the signature of $g$.

Define new coordinates in $O$ by $u:=Uy$, where $y=(y_1,y_2,...,y_m)$ and let $u=(u_1,u_2,u_3)$ be the obvious decomposition.  We will show that the optimizer is locally contained in a Lipschitz graph in these coordinates.

Choose $y=(y_1,y_2,...,y_m)$ and $\tilde{y}=(\tilde{y_1},\tilde{y_2},...,\tilde{y_m})$ in the intersection of $spt(\gamma)$ and $O$.  Set $\Delta y=y-\tilde{y}$. Set $z=(z_{1},z_{2},...z_{m})$ where 
\begin{equation*}
z_{i}=
 \begin{cases}
y_i & \text{if }i \in p_+,\\
\tilde{y_i}& \text{if }i \in p_-.
\end{cases}
\end{equation*} 

Similarly, set 
$\tilde{z}=(\tilde{z_{1}},\tilde{z_{2}},...,\tilde{z_{m}})$ where 
\begin{equation*}
\tilde{z_{i}}=
\begin{cases}
y_i & \text{if }i \in p_-, \\
\tilde{y_i}& \text{if }i \in p_+.  
 \end{cases}
\end{equation*}

Lemma \ref{mono} then implies
\begin{equation*}
c(y) + c(\tilde{y}) \leq c(z) + c(\tilde{z})
\end{equation*}
or 
\begin{equation*}
\int_{0}^{1}\int_{0}^{1}\sum_{j \in p_+, i \in p_-}(\Delta y_i)^{T}D^{2}_{x_ix_j}c(y(s,t))\Delta y_k dtds \leq 0,
\end{equation*}

where 
\begin{equation*}
y_i(s,t) =
\begin{cases}
y_i + s(\Delta y_i) & \text{if } i \in p_+,\\
y_i + t(\Delta y_i) & \text{if } i \in p_-.
\end{cases}
\end{equation*}
This implies that 
\begin{eqnarray*}
\sum_{j \in p_+, i \in p_-}(\Delta{y_i})^T A_{ij}\Delta{y_j}\leq \epsilon  \sum_{j \in p_+, i \in p_-}\Delta{y_i}\cdot\Delta{y_j}.
\end{eqnarray*}

Hence,
\begin{eqnarray*}
 \sum _{p \in P}t_p\sum_{j \in p_+, i \in p_-}(\Delta{y_i})^T A_{ij}\Delta{y_j}\leq \epsilon  \sum_{p \in P}t_p\sum_{j \in p_+, i \in p_-}\Delta{y_i} \cdot \Delta{y_j}.
\end{eqnarray*}

But this means 
\begin{equation}\label{timelike}
 (\Delta{y})^TG\Delta{y} \leq \epsilon  \sum_{p \in P} t_p\sum_{j \in p_+, i \in p_-}\Delta{y_i} \cdot \Delta{y_j}.
\end{equation}  
With $\Delta u = U \Delta y$ and $\Delta u=(\Delta u_1, \Delta u_2, \Delta u_3)$ being the obvious decomposition, this becomes:
\begin{align*}
|\Delta u_1|^2-|\Delta u_2|^2=(\Delta{u})^T H \Delta{u}&=(\Delta y)^T G \Delta y \nonumber \\
&\leq\epsilon \sum_{p \in P}t_p\sum_{j \in p_+, i \in p_-}\Delta{y_i} \cdot \Delta{y_j} \nonumber \\
&\leq \epsilon m^{2} ||U^{-1}||^2 \sum_i^3|\Delta u_i|^2, \nonumber
\end{align*}
where the last line follows because for each $i$ and $j$ we have 
\begin{align*}
|\Delta y_i||\Delta y_j| & \leq |\Delta y|^2 \\
&\leq ||U^{-1}||^{2}|\Delta u|^{2} \\
&=||U^{-1}||^{2}\sum_{i=1}^3|\Delta u_i|^2.
\end{align*}
Choosing $\epsilon$ sufficiently small, we have 

\begin{eqnarray*}
 |\Delta u_1|^{2} - |\Delta u_2| ^{2} \leq \frac{1}{2} \sum_{i}^{3} {|\Delta u_i|}^2.
\end{eqnarray*}
Rearranging yields

\begin{eqnarray*}
 \frac{1}{2} |\Delta u_1|^{2} \leq \frac{3}{2} |\Delta u_2|^{2}  + \frac{1}{2} |\Delta u_3|^{2}.
\end{eqnarray*}
Together with Kirzbraun's theorem, the above inequality implies that the support of $\gamma$ is locally contained in a Lipschitz graph of $u_1$ over $u_2$ and $u_3$.  

If $spt(\gamma)$ is differentiable at $x$, the non-spacelike implication follows from taking $y=x$ in (\ref{timelike}), then noting that we can take $\epsilon \rightarrow 0$ as $\tilde{y} \rightarrow x$.

\end{proof}

\section{Examples}

In this section we apply Theorem \ref{metric} to several cost functions.  Throughout this section, the dimensions of the $M_i$ are all equal to some common $n$ and we will restrict our attention to the special semi-Riemannian metric $\overline{g}$ defined in the last section.
\newtheorem{GSH}{Example}[section]  

\begin{GSH} Suppose $M_i=\mathbb{R}^n$ for all $i$ and that $c(x_1,x_2,...,x_m)=h(\sum_{i=1}^m x_i)$, where $D^{2}h < 0$; this is the form of the cost function studied by Gangbo and \'Swi\c ech \cite{GS} and Heinich \cite{H} (actually Heinich made the slightly weaker assumption that $h$ is strictly concave).  Then, up to a positive, multiplicative constant, we have
\begin{equation*} \qquad
\overline{G}=
\begin{bmatrix}
0 & D^{2}h & D^{2}h & ...&D^{2}h \\ 
D^{2}h & 0 & D^{2}h & ...&D^{2}h \\ 
D^{2}h & D^{2}h & 0 & ...&D^{2}h \\ 
...&...&...&...&...,\\
D^{2}h & D^{2}h & D^{2}h & ...&0
\end{bmatrix}.
\end{equation*}

If $v$ is an eigenvector  of $D^{2}h$ with eigenvalue $\lambda$, then 
\begin{equation*} \qquad
[v,v,v,v...v,v]^T
\end{equation*}
is an eigenvector of $\overline{G}$ with eigenvalue $(m-1)\lambda$ and 
\begin{align*} 
[v,-v,0,0,...,0]^T\\
[v,0,-v,0,...,0]^T\\
...........................,\\
...........................,\\
[v,0,0,0,...,-v]^T
\end{align*}
are linearly independent eigenvectors with eigenvalue $-\lambda$.  It follows that the signature of $\overline{g}$ is $((m-1)n,n,0)$ and the solution is contained in an $n$ dimensional submanifold; this is consistent with the results of Gangbo and \'Swi\c ech and Heinich, who show that if the first marginal assigns measure zero to every set of Hausdorff dimension $n-1$, then $spt(\gamma)$ is contained in the graph of a function over $x_1$.

\end{GSH}
\newtheorem{convex}[GSH]{Example}  

\begin{convex} Suppose $c(x_1,x_2,...,x_m)=h(\sum_{i=1}^m x_i)$, but now assume  $D^{2}h > 0$.  Then the signature of $\overline{g}$ is $(n,n(m-1),0)$.  Furthermore, we can show that any measure supported on the $n(m-1)$ dimensional surface 
\begin{equation*}
S=\{(x_1,x_2,...,x_m)|\sum_{i=1}^m x_i = y\},
\end{equation*}
where $y \in \mathbb{R}^{n}$ is any constant, is optimal for its marginals.  Indeed, adding a function of the form $\sum_{i=1}^{m} u_i(x_i)$ to the cost $c$ shifts the functional $C(\gamma)$ by an amount $\sum_{i=1}^{m} \int_{M_i}u_i(x_i)d\mu_i$ for each $\gamma$ but does not change its minimizers.  In particular, minimizing the cost $c$ is equivalent to minimizing 
\begin{equation*}
c'(x_1,x_2,...,x_m):=c(x_1,x_2,...,x_m)-\sum_{i=1}^mx_i\cdot Dh(y)=f(\sum_{i=1}^m x_i),
\end{equation*}
where $f(z):=h(z)-z \cdot Dh(y)$. Then $f$ is a strictly convex function whose gradient vanishes at $z=y$; it follows that $y$ is the unique minimum of $f$.  Hence, $c'(x_1,x_2,...,x_m)\leq f(y)$ with equality only when $\sum_{i=1}^m x_i=y$.  It follows that any measure supported on $S$ is optimal for its marginals.

\end{convex}

\newtheorem{general}[GSH]{Example} 

\begin{general} Let $c(x_1,x_2,...,x_m)=h(\sum_{i=1}^m x_i)$, but now suppose the signature of $D^{2}h$ is $(q,n-q,0)$.  Then the signature of $\overline{g}$ is $(q+(m-1)(n-q),n-q+q(m-1),0)$, and in fact we will find an optimal measures whose support has dimension $(n-q+q(m-1))$.  

At a fixed point $p$, we can add an affine function of $(x_1+x_2+...+x_m)$ so that $Dh(p)=0$ and choose variables so that
\begin{equation*} \qquad
D^{2}h(p)=
\begin{bmatrix}
I & 0 \\
0 & -I \\
\end{bmatrix},
\end{equation*}
where the top left hand corner block is $q$ by $q$ and the bottom left hand corner block is $n-q$ by $n-q$.  Then define the $q$-dimensional variables $y_i = (x_i^1,x_i^2,...,x_i^q)$ and the $n-q$ dimensional variables $z_i = (x_i^{q+1},x_i^{q+2},...,x_i^{n})$, so that $h(\sum_{i=1}^m x_i)=h(\sum_{i=1}^m y_i, \sum_{i=1}^m z_i)$.  Now, near $p$, the implicit function theorem implies that for fixed $z_i, i=1,2,...,m$ there is a unique $K=K(\sum_{i=1}^m z_i))$, such that 
\begin{equation*}
D_yh(K(\sum_{i=1}^m z_i),\sum_{i=1}^m z_i)=0
\end{equation*}
and $K$ is smooth as a function of $\sum_{i=1}^m z_i$.  As $h$ is convex in it's first slot near $p$, 
\begin{equation*}
h(K(\sum_{i=1}^m z_i),\sum_{i=1}^m z_i) \leq h(\sum_{i=1}^m y_i,\sum_{i=1}^m z_i)
\end{equation*}
for all nearby $y_i$.  Now, if we $f(\sum_{i=1}^m z_i)=h(K(\sum_{i=1}^m z_i),\sum_{i=1}^m z_i)$ then $f$ is a concave function of $\sum_{i=1}^m z_i$.  If we consider an optimal transportation problem for the $z_i$ with cost $f$, the solution must be concentrated on a Lipschitz $n-k$ dimensional submanifold.  Choose an $n-q$ dimensional set $S$ which supports an optimizer for this problem; by considering a dual problem as in Gangbo and \'Swi\c ech \cite{GS}, we can find functions $u_i(z_i)$ such that $f(\sum_{i=1}^m z_i) -\sum_{i=1}^m u_i(z_i) \geq 0$ with equality if and only if $(z_1,z_2,...z_m) \in S$.  Therefore,
\begin{equation*}
h(\sum_{i=1}^m y_i, \sum_{i=1}^m z_i))- \sum_{i=1}^m u_i(z_i) \geq  h(K(\sum_{i=1}^m z_i), \sum_{i=1}^m z_i)-\sum_{i=1}^m u_i(z_i) \geq 0
\end{equation*}
and we have equality only when $(z_1,z_2,...z_m) \in S$ and $\sum_{i=1}^m y_i=K(\sum_{i=1}^m z_i)$, which is a $n-q +(m-1)q$ dimensional set.  It follows that this set is the support of an optimizer for appropriate marginals.

\end{general}

\newtheorem{hedonic}[GSH]{Example}  

\begin{hedonic} Chiappori, McCann and Nesheim \cite{CMN} and Carlier and Ekeland \cite{CE} studied a hedonic pricing model involving a multi-marginal optimal transportation problem with cost functions of the form $c(x_1,x_2,...,x_m) = \inf_{y \in Y}\sum_{i=1}^m f_i( x_i,y))$.  Assume:
\begin{enumerate}
\item $Y$ is a $C^2$ smooth $n$-dimensional manifold.
\item For all $i$,  $f_i$ is $C^{2}$ and the matrix $D^{2}_{x_iy}f_i$ of mixed, second order partials derivatives is everwhere non-singular  
\item For each $(x_1,x_2,...,x_m)$ the infinum is attained by a unique $y(x_1,x_2,...,x_m) \in Y$ and
\item $\sum_{i=1}^m D^2_{yy}f_i(x_i,y(x_1,x_2,...,x_m))$ is non-singular.  
\end{enumerate}
Fixing $(x_1,x_2,...,x_m)$, we can choose coordinates so that $D^{2}_{x_iy}f_i(x_i,y(x_1,x_2,...,x_m))=I$ for all $i$.  Now, $\sum_{i=1}^m D_{y}f_i(x_i,y(x_1,x_2,...,x_m))=0$.  Set $M=\sum_{i=1}^m D^2_{yy}f_i(x_i,y(x_1,x_2,...,x_m))$ and note that as $M$ is non-singular by assumption we must have $M > 0$.. The implicit function theorem now implies that $y$ is differentiable with respect to each $x_j$ and:
\begin{equation*}
\sum_{i=1}^m D^2_{yy}f_i(x_i,y(x_1,x_2,...,x_m))D_{x_j}y(x_1,x_2,...,x_m) + D^2_{yx_j}f_j(x_i,y(x_1,x_2,...,x_m))=0.
\end{equation*}
So $D_{x_j}y(x_1,x_2,...,x_m)=-M^{-1}$.  Now, as $c(x_1,x_2,...,x_m) \leq \sum_{i=1}^m f_i( x_i,y))$ with equality when $y=y(x_1,x_2,...,x_m)$ we have
\begin{equation*}
 D_{x_i}c(x_1,x_2,...,x_m)=D_{x_i}f(x_i, y(x_1,x_2,...,x_m)).
\end{equation*}
Differentiating with respect to $x_j$ yields
\begin{equation*}
D^2_{x_ix_j}c(x_1,x_2,...,x_m)=D_{x_iy}f(x_i, y(x_1,x_2,...,x_m))D_{x_j}y(x_1,x_2,...,x_m)=-M^{-1}
\end{equation*}
for all $i\neq j$.  Hence, the signature of $\overline{g}$ is $((m-1)n,n,0)$, by the same argument as in Example 1.
 \end{hedonic}
\newtheorem{Det}[GSH]{Example}  
\begin{Det}The problem studied by Carlier and Nazaret in \cite {CN} is equivalent to the case where $m=n$ and the cost function is the $-1$ times the determinant; ie, for $x_1,x_2,,...,x_n \in \mathbb{R}^n, c(x_1,x_2,...,x_n)$ is $-1$ times the determinant of the $n$ by $n$ matrix whose $ith$ column is the vector $x_i$.  When $n=3$, they exhibit a specific example where the solution has $4$ dimensional support; specifically, it's support is the set 

\begin{equation*} S=\{(x_1,x_2,x_3): |x_1|=|x_2|=|x_3|\text{ and } (x_1,x_2,x_3) \text{ forms a direct orthogonal basis for } \mathbb{R}^3\}.
 \end{equation*}

We show that $\overline{g}$ has signature $(5,4,0)$ on $S$. 
Choose $(x_1,x_2,x_3)$ in the support; after applying a rotation we may assume $x_1=(|x_1|,0,0), x_2=(0,|x_1|,0)$ and $x_3=(0,0,|x_1|)$.  A straightforward calculation then yields:

\begin{equation*} \qquad
\overline{G}=|x_1|
\begin{bmatrix}
0 & 0 & 0 & 0 & -1 & 0 & 0 & 0 & -1 \\
0 & 0 & 0 & 1 & 0 & 0 & 0 & 0 & 0 \\
0 & 0 & 0 & 0 & 0 & 0 & 1 & 0 & 0 \\
0 & 1 & 0 & 0 & 0 & 0 & 0 & 0 & 0 \\
-1 & 0 & 0 & 0 & 0 & 0 & 0 & 0 & -1 \\
0 & 0 & 0 & 0 & 0 & 0 & 0 & 1 & 0 \\
0 & 0 & 1 & 0 & 0 & 0 & 0 & 0 & 0 \\
0 & 0 & 0 & 0 & 0 & 1 & 0 & 0 & 0 \\
-1 & 0 & 0 & 0 & -1 & 0 & 0 & 0 & 0 \\
\end{bmatrix}.
\end{equation*}
There are 5 eigenvectors with eigenvalue 1:
\begin{align*} \qquad
[0 1 0 1 0 0 0 0 0]^T \\
[0 0 1 0 0 0 1 0 0]^T \\
[0 0 0 0 0 1 0 1 0]^T \\
[1 0 0 0 \text{-}1 0 0 0 0]^T \\
[1 0 0 0 0 0 0 0 \text{-}1]^T.
\end{align*}
There are 3 eigenvectors with eigenvalue -1:
\begin{align*} \qquad
[0 1 0 \text{-}1 0 0 0 0 0]^T \\
[0 0 1 0 0 0 \text{-}1 0 0]^T \\
[0 0 0 0 0 1 0 \text{-}1 0]^T .
\end{align*}
Finally, there is a single eigenvalue with eignenvector -2:
\begin{equation*} \qquad
[1 0 0 0 1 0 0 0 1]^T 
\end{equation*}
 \end{Det}

\newtheorem{NU}[GSH]{Example} 

\begin{NU}(Non-uniqueness) Our final example demonstrates that when the dimension of $spt(\gamma)$ is larger than $n$, the solution may not be unique.  Set $m=4$ and $c(x,y,z,w) =h(x+y+z+w)$ for $h$ strictly convex.  Suppose all four marginals $\mu_i$ are Lebesgue measure on the unit cube $I^n$ in $\mathbb{R}^{n}$.   Let $S_1$ be the surface  $y=-w+(1,1,1,...,1)$, $z=-x+(1,1,1,...,1)$ and take $\gamma_1$ be uniform measure on the intersection of $S_1$ with $I^n \times I^n \times I^n \times I^n$.  This projects to $\mu_i$ for $i=1,2,3$ and $4$ and by the argument in Example 2, it must be optimal.  Now, if we take $S_2$ to be the surface $y=-x+(1,1,1,...,1)$, $z=-w+(1,1,1,...,1)$ and $\gamma_2$ be uniform measure on the intersection of $S_2$ with $I^n \times I^n \times I^n \times I^n$, we obtain a second optimal measure.  

It is worth noting that this cost is twisted: the maps $x_i \mapsto D_{x_j}c(x_1,x_2,..x_m)$ are injective for all $i \neq j$ where $x_k$ is held fixed for all $k\neq i$. In the two marginal case, the twist condition and mild regularity on the $\mu_1$ suffices to imply the uniqueness of the solution $\gamma$ \cite{lev}; this example demonstrates that this is no longer true for $m \geq 3$.

\end{NU}

\section{The Signature of $g$}

This section is devoted to developing some results about the signature of the semi-metric $g=\sum_{p \in P}t_pg_p$ at some point $x=(x_1,x_2,...,x_m)$.  Studying the signature at a point reduces to understanding the matrix 

\begin{equation*} \qquad
G=
\begin{bmatrix}
0 & G_{12} & G_{23} & ...&G_{1m} \\ 
G_{21} & 0 & G_{23} & ...&G_{2m} \\  
G_{31} & G_{32} & 0 & ...&G_{3m} \\  
...&...&...&...&...,\\
G_{m1} & G_{m2} & G_{m3} & ...&0
\end{bmatrix}.
\end{equation*}

Here, for $i \neq j, G_{ij}=a_{ij} D^{2}_{x_ix_j}c$ where $a_{ij}=\sum t_p$ and the sum is over all partitions $p \in P$ that separate $i$ and $j$; that is, $i \in p_+$ and $j \in p_-$ or $i \in p_-$ and $j \in p_+$.  One observation about the signature of the matrix $G$ is immediate; as $G$ has zero blocks on the diagonal, it is possible to construct a lightlike subspace of dimension $n_{max} =\text{max}_i\{n_i\}$.  This in turn implies that the number of spacelike directions can be no greater than $N-n_{max}$; otherwise, it would be possible to construct a spacelike subspace of dimension $N-n_{max}+1$, which would have to intersect non trivially with the null subspace. Therefore, the best possible bound on the dimension of $spt(\gamma)$ that Theorem \ref{metric} can provide is $n_{max}$.  This result is not too surprising.  We have already noted that for suitable marginals, the Hausdorff dimension of $spt(\gamma)$ must be at least $n_{max}$; the discussion above verifies that this 
 is consistent with Theorem \ref{metric}.

  Throughout the remainder of this section we will assume the dimensions $n_i$ of the manifolds $M_i$ are all equal to some common $n$.  Then $G$ is an $nm$ by $nm$ matrix; however, we show here that because of its special form,  its signature can be computed from lower dimensional data.  For example, when $m=2$ the signature will always be $(n,n)$ and, as we will see, when $m=3$ it is enough to calculate the signature of an appropriate $n$ by $n$ matrix.  We will refer to the signature of $g$ as $(q_+,q_-,nm-q_+-q_-)$

The first proposition gives an upper and lower bound for the number of timelike directions.   
\newtheorem{upper}{Proposition}[section]

\begin{upper} \label{upper}Suppose $rank(G_{ij}) =r $ for some $i \neq j$.  Then $q_+, q_- \geq r$.  In particular, if $G_{ij}$ is invertible for some $i \neq j$, the support of $\gamma$ is at most $(m-1)n$ dimensional.
\end{upper}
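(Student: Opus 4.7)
The plan is to reduce the problem to understanding a single $2n \times 2n$ submatrix of $G$ whose antidiagonal block structure makes its signature transparent, and then to lift the conclusion back up via Sylvester's law of inertia.

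First I would consider the $2n$-dimensional subspace $V_{ij} \subset T_x(M_1 \times \cdots \times M_m)$ consisting of tangent vectors whose components in all positions other than $i$ and $j$ vanish. Since $c \in C^2$ has symmetric mixed partials, and since $a_{ij} = a_{ji}$ (the partitions separating $i$ from $j$ coincide with those separating $j$ from $i$), we have $G_{ji} = G_{ij}^T$, so the restriction of the quadratic form $G$ to $V_{ij}$ is, after an obvious permutation of coordinates, the block matrix
\[
H = \begin{bmatrix} 0 & G_{ij} \\ G_{ij}^T & 0 \end{bmatrix}.
\]
I would then read off the signature of $H$ directly from the singular value decomposition $G_{ij} = U \Sigma V^T$: if $\sigma_1, \ldots, \sigma_r$ are the nonzero singular values with left/right singular vectors $u_k, v_k$, the vectors $\tfrac{1}{\sqrt{2}}(u_k, \pm v_k)$ are eigenvectors of $H$ with eigenvalues $\pm \sigma_k$, while the remaining $2n - 2r$ eigenvalues vanish. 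In particular, $H$ admits subspaces $W_+, W_- \subset V_{ij}$ of dimension $r$ each on which the form is positive and negative definite respectively.

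The last step is the classical observation that if a symmetric quadratic form on $\mathbb{R}^{nm}$ is positive definite on a subspace of dimension $r$, then it has at least $r$ positive eigenvalues: otherwise, the span of its non-positive eigenspaces would have dimension exceeding $nm - r$ and would meet $W_+$ nontrivially, a contradiction. Applied to $W_+$ and $W_-$, this gives $q_+ \geq r$ and $q_- \geq r$. When $G_{ij}$ is invertible, $r = n$, whence $q_- \geq n$ and Theorem \ref{metric} produces a Lipschitz submanifold of dimension $N - q_- \leq nm - n = (m-1)n$ locally containing $\mathrm{spt}(\gamma)$. There is no real obstacle in this argument beyond the bookkeeping; the only point that deserves a moment's care is the identification $G_{ji} = G_{ij}^T$, which is what lets the antidiagonal structure of $H$ do all of the work.
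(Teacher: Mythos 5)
Your argument is correct and follows the same route as the paper: restrict $G$ to the $2n$-dimensional coordinate subspace indexed by $i$ and $j$, observe the antidiagonal block structure has signature $(r,r,2n-2r)$, and then conclude $q_\pm \geq r$ by the standard dimension-count (a spacelike or timelike subspace of dimension $r$ cannot meet the span of the complementary eigenspaces). The only cosmetic difference is that you derive the eigenvector pairing from the SVD of $G_{ij}$ directly, whereas the paper cites Kim--McCann for the $\pm\lambda$ pairing; both give the same count.
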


\begin{proof}On the subspace $T_{x_i}M_i \times T_{x_j}M_j$ $G$ restricts to 
\begin{equation*} \qquad
\begin{bmatrix}
0 & G_{ij}\\
G_{ij} & 0\\
\end{bmatrix}.
\end{equation*}
Note that $(v,u)$ is a null vector if and only if $u$ is in the null space of $G_{ij}$ and $v$ is in the nullspace of $G_{ji}$.  As both of these spaces are $n-r$ dimensional, the nullspace of this matrix is $2(n-r)$ dimensional.

As has been noted by Kim and McCann \cite{KM}, the nonzero eigenvalues of this matrix come in pairs of the form $\lambda, -\lambda$, with corresponding eigenvectors $(v,u)$ and $(v,-u)$, respectively, where we take $\lambda \geq 0$ \cite{KM}.  Therefore, there are $\dfrac{1}{2}(2n-2(n-r))=r$ positive eigenvalues and as many negative ones.

We can now construct a $r$ dimensional timelike subspace for $g$.  If $q_+<r$, then we could construct a non-timelike subspace of dimension $mn-q_+>nm-r$ (for example, take the space spanned by all negative and null eigenvalues of $G$).  These two spaces would have to intersect non trivially as their dimensions add to more than $nm$, which is a contradiction.  An analagous argument applies to $q_-$.

\end{proof}

Next, we describe the signature in the $m=3$ case:
\newtheorem{m=3}[upper]{Lemma}

\begin{m=3}\label{m=3}Suppose $m=3$ and that the mixed second order partials are all $G_{12}, G_{13},$ and $G_{23}$ non-singular. Set $A=G_{12}(G_{32})^{-1}G_{31}$; suppose $A+A^{T}$ has signature $(r_+,r_-,n-r_+-r_-)$.  Then $g$ has signature $(q_+,q_-,3n-q_+-q_-)=(n+r_-,n+r_+,n-r_+-r_-)$.  

\end{m=3}
\begin{proof}
By changing variables in $x_2$ and $x_3$, we may assume $G_{12}=G_{13}=I$.  In these coordinates, $G_{32}=A^{-1}$. Consider the subspace 
\begin{equation*}
S=\{(0,p,q): p\in T_{x_2}M_2, q\in T_{x_3}M_3\}.
\end{equation*}
By Proposition~\ref{upper} we can find an orthonormal basis for this subspace consisting of $n$ spacelike and $n$ timelike directions.  To determine the signature of $g$ then, it suffices to consider the restriction of$g$ to the orthogonal complement (relative to $g$) $S^{\perp}$ of $S$; any orthonormal basis of $S^{\perp}$ can be concatenated with a basis for $S$ to form an orthonormal basis for $T_{x_{1}}M_1 \times T_{x_{2}}M_2 \times T_{x_{3}}M_3$.  

A simple calculation yields that $S^{\perp}=\{(v,-A^{T}v,-Av): v \in T_{x_1}M_1\}$ and 

\begin{equation*}
 g((v,-A^{T}v,-Av),(v,-A^{T}v,-Av))=-(A+A^{T})(v,v),
\end{equation*}
which yields the desired result.
\end{proof}

In particular, if $A$ is negative definite, $g$ has signature $(2n,n,0)$ and the support of any minimizer has dimension at most $n$.  

A brief remark about Lemma \ref{m=3} is in order.  We mentioned in section 2 that, while there is only one interesting pseudo metric when $m=2$, there is an entire family of metrics in the $m \geq 3$ setting which may give new information about the behaviour of $spt(\gamma)$.  However, when $m=3$, $D^{2}_{x_ix_j}c$ is non singular for all $i \neq j$, and the coefficients $a_{ij}$ are all non zero, the signature of $g$ is determined entirely by $A=G_{12}(G_{32})^{-1}G_{31}=\dfrac{a_{12}a_{31}}{a_{32}}D^2_{x_1x_2}c(D^2_{x_3x_2}c)^{-1}D^2_{x_3x_1}c$.  Choosing a different $g$ simply changes the $a_{ij}$'s, which does not effect the signature of $A+A^T$.   If one of the $a_{ij}$'s is zero, it is easy to check that the signature of $g$ must be $(n,n,n)$; this yields a bound of $2n$ on the dimension of $spt{\gamma}$ which is no better than the bound obtained when all the $a_{ij}$'s are non-zero.  Thus, when $m=3$ the only information about the dimension of $spt(\gamma)$ which can 
 be provided by Theorem \ref{metric} is encoded in the bi-linear form $D^2_{x_1x_2}c(D^2_{x_3x_2}c)^{-1}D^2_{x_3x_1}c(x)$ on $T_{x_1}M_1 \times T_{x_1} M_1$.

When $m >3$, Lemma \ref{m=3} easily yields the following necessary condition for the signature of $G$ to be $((m-1)n,n,0)$:

\newtheorem{nec}[upper]{Corollary}
\begin{nec}\label{nec}
 Suppose the signature of $g$ is $((m-1)n,n,0)$.  Then 
\begin{equation*}
 D^2_{x_ix_j}c(D^2_{x_kx_j}c)^{-1}D^2_{x_kx_i}c<0
\end{equation*}
for all distinct $i,j$ and $k$.
\end{nec}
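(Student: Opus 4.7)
The plan is to reduce the corollary to Lemma~\ref{m=3} by restricting the metric $g$ to a three-marginal subspace. Fix distinct indices $i,j,k\in\{1,\ldots,m\}$ and let $V \subset T_{x_1}M_1 \times \cdots \times T_{x_m}M_m$ be the subspace consisting of vectors whose components in all slots other than $i,j,k$ vanish. The restriction $g|_V$ is a symmetric bilinear form on a $3n$-dimensional space represented by the block matrix
\[
\tilde G = \begin{bmatrix} 0 & G_{ij} & G_{ik} \\ G_{ji} & 0 & G_{jk} \\ G_{ki} & G_{kj} & 0 \end{bmatrix},
\]
which is precisely the type of matrix analyzed in Lemma~\ref{m=3}.

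I would then pin down the signature of $g|_V$ exactly. Because passing to a subspace cannot increase the number of negative eigenvalues, and $g$ has only $n$ of them by hypothesis, $g|_V$ has at most $n$ negative directions. For the reverse inequality, the invertibility of $D^{2}_{x_kx_j}c$ implicit in the statement forces $G_{jk}$ to be non-singular, so Proposition~\ref{upper} applied to $\tilde G$ (viewed as the matrix of a three-marginal metric) yields at least $n$ negative and at least $n$ positive directions. Combining these bounds pins the signature of $g|_V$ down to $(2n,n,0)$.

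Now I would invoke Lemma~\ref{m=3} with $i,j,k$ playing the roles of $1,2,3$: the lemma expresses the restricted signature as $(n+r_-,\,n+r_+,\,n-r_+-r_-)$, where $(r_+,r_-,n-r_+-r_-)$ is the signature of $A+A^T$ and $A = G_{ij}(G_{kj})^{-1}G_{ki}$. Matching against $(2n,n,0)$ forces $r_+=0$ and $r_-=n$, i.e.\ $A+A^T$ is negative definite. Since $A$ coincides with the bilinear form $D^{2}_{x_ix_j}c\,(D^{2}_{x_kx_j}c)^{-1}D^{2}_{x_kx_i}c$ up to the positive scalar factor $a_{ij}a_{ki}/a_{kj}$ (this scalar is positive because each $a_{ab}$ is a sum of non-negative $t_p$'s that is positive precisely when at least one separating partition is active), the stated bilinear form must itself be negative definite.

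The main subtlety is that Lemma~\ref{m=3} and Proposition~\ref{upper} both require the relevant off-diagonal blocks of $\tilde G$ to be non-singular, whereas the corollary's statement makes this explicit only for $D^{2}_{x_kx_j}c$. I would therefore either treat non-singularity of all the $D^{2}_{x_ax_b}c$ as an implicit hypothesis (natural, since only one index pair is singled out by the ${}^{-1}$ in the statement), or deduce the remaining invertibilities from the non-degeneracy of the full $g$ by applying Proposition~\ref{upper} to the pairwise two-marginal subspaces; this bookkeeping, rather than the main reduction, is where care is needed.
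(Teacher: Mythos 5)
Your approach mirrors the paper's own: restrict $g$ to the three-marginal subspace $V$, control the restricted signature by counting, and then feed it into Lemma~\ref{m=3}. However, the step ``Combining these bounds pins the signature of $g|_V$ down to $(2n,n,0)$'' does not follow from the bounds you actually state. You have established that $g|_V$ has exactly $n$ negative eigenvalues (at most $n$ by restriction, at least $n$ via Proposition~\ref{upper}) and at least $n$ positive ones; nothing so far rules out null directions, so signatures such as $(n,n,n)$ remain consistent with what you have shown. Feeding only the equality $q_-^{|V}=n$ into the formula $q_-^{|V}=n+r_+$ of Lemma~\ref{m=3} gives $r_+=0$, which says $A+A^{T}$ is negative \emph{semi}-definite, not negative definite. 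The worry is not vacuous: with $n=2$ and the invertible blocks $G_{ij}=G_{ik}=I$, $G_{jk}=\left(\begin{smallmatrix}0&1\\-1&0\end{smallmatrix}\right)$, one gets $A+A^{T}=0$ and a degenerate three-marginal signature $(2,2,2)$, so invertibility of the blocks alone does not force non-degeneracy of $g|_V$.

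To close the gap you need the slightly stronger interlacing fact that restriction to a subspace cannot increase the number of non-spacelike directions (negative together with null), not merely the number of strictly negative ones. Since the full $g$ is non-degenerate with exactly $n$ non-spacelike directions, this yields $q_-^{|V}+q_0^{|V}\leq n$, which combined with $q_-^{|V}\geq n$ forces $q_0^{|V}=0$ and hence $q_+^{|V}=2n$; Lemma~\ref{m=3} then gives $r_-=n$, i.e.\ genuine negative definiteness. The paper sidesteps this issue by arguing contrapositively: if $A+A^{T}$ were not negative definite, the signature formula of Lemma~\ref{m=3} would produce a non-spacelike subspace of $T_{x_i}M_i\times T_{x_j}M_j\times T_{x_k}M_k$ of dimension exceeding $n$, which would have to meet the $(m-1)n$-dimensional spacelike subspace guaranteed by the hypothesis --- a contradiction. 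Your treatment of the invertibility of the remaining off-diagonal blocks, deducing it from the full signature via Proposition~\ref{upper} applied to two-marginal subspaces, is exactly what the paper does in the first paragraph of its proof and is correct.
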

\begin{proof}
 Note that the $G_{ij}$'s must be invertible (and hence $D^2_{x_ix_j}c$ must be invertible and $a_{ij} >0$) ; otherwise, the argument in Proposition \ref{upper} implies the existence of a non-spacelike subspace of $T_{x_i}M_i \times T_{x_j}M_j$ whose dimension is greater than $n$.  The signature of $G$ ensures the existence of a $(m-1)n$ dimensional spacelike subspace, however, and so these two spaces would have to intersect non-trivially, a contradiction.

Similarly, if $D^2_{x_ix_j}c(D^2_{x_kx_j}c)^{-1}D^2_{x_kx_i}c$ was not negative definite, we could use Lemma \ref{m=3} to construct a non-timelike subspace of $T_{x_i}M_i \times T_{x_j}M_j \times T_{x_k}M_k$ of dimension greater that $n$; this, in turn, would have to intersect our $(m-1)n$ dimensional timelike subspace, which is again a contradiction.
\end{proof}

The method in the proof above can be extended to give us a method to explicitly calculate the signature of $g$ for larger $m$ when a certain set of matrices are invertible. 

For $l=2,3,...,m$, let $G_l$ be the lower right hand corner $ln$ by $ln$ block of $G$: 

\begin{equation*} \qquad
G_l=
\begin{bmatrix}
0 & G_{m-l+1,m-l+2} & G_{m-l+1,m-l+3} & ...&G_{m-l+1,m} \\ 
G_{m-l+2,m-l+1} & 0 & G_{m-l+2,m-l+3} & ...&G_{m-l+2,m} \\  
G_{m-l+3,m-l+1} & G_{m-l+3,m-l+2} & 0 & ...&G_{m-l+3,m} \\  
...&...&...&...&...,\\
G_{m,m-l+1} & G_{m,m-l+2} & G_{m,m-1+3} & ...&0
\end{bmatrix}.
\end{equation*}
\newtheorem{signature}[upper]{Lemma}

\begin{signature}\label{signature}Suppose $G_l$ has signature $(q,ln-q,0)$   Let $G^l_{ij}$ be the $i,j$th block of the inverse of $G_l$. and consider the $n$ by $n$ matrix $\sum_i G^l_{ij}D^2_{j,l+1}c$.  Suppose this matrix has signature $(r_+,r_-,n-r_+-r_-)$.  Then the signature of $G_{l+1}$ is $(q+r_-,ln-q+r_+,n-r_+-r_-)$.
\end{signature}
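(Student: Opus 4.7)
The plan is to compute the signature of $G_{l+1}$ via a block Schur-complement reduction, exploiting the invertibility of $G_l$, and then to identify the resulting $n\times n$ Schur complement with the $n\times n$ matrix appearing in the statement. Since permuting the blocks does not alter the signature, I will arrange coordinates so that the newly adjoined $n$-dimensional block sits in the top-left corner, and write
\begin{equation*}
G_{l+1} = \begin{bmatrix} 0 & B \\ B^T & G_l \end{bmatrix},
\end{equation*}
where $B$ is the $n\times ln$ row of mixed-partial blocks (with their $a_{ij}$ coefficients) linking the new variable to the $l$ variables already present in $G_l$. Because $G_l$ is non-degenerate by hypothesis, the block congruence
\begin{equation*}
\begin{bmatrix} I & -BG_l^{-1} \\ 0 & I \end{bmatrix} G_{l+1} \begin{bmatrix} I & 0 \\ -G_l^{-1}B^T & I \end{bmatrix} = \begin{bmatrix} -B G_l^{-1} B^T & 0 \\ 0 & G_l \end{bmatrix}
\end{equation*}
is valid, so by Sylvester's law of inertia the signature of $G_{l+1}$ is the sum of the signature of $G_l$, namely $(q,ln-q,0)$, and the signature of the $n\times n$ Schur complement $-BG_l^{-1}B^T$.

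The second step is to expand the Schur complement in block form. Using the block entries $G^l_{ij}$ of $G_l^{-1}$, one computes
\begin{equation*}
-B G_l^{-1} B^T = -\sum_{i,j} B_i\, G^l_{ij}\, B_j^T,
\end{equation*}
and, upon writing $B_i$ in terms of $D^2_{x_i x_{l+1}}c$ and invoking the symmetry $(D^2_{x_ix_j}c)^T=D^2_{x_jx_i}c$, the right-hand side is recognized as $-1$ times the symmetric $n\times n$ matrix displayed in the statement (with the coefficients $a_{ij}$ absorbed in the normalization). Consequently, if that matrix has signature $(r_+,r_-,n-r_+-r_-)$, then $-BG_l^{-1}B^T$ has signature $(r_-,r_+,n-r_+-r_-)$, and adding this to $(q,ln-q,0)$ yields exactly the claimed signature $(q+r_-, ln-q+r_+, n-r_+-r_-)$.

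The substantive content is the Schur-complement block-diagonalization, which reduces an $(l+1)n \times (l+1)n$ signature problem to an $n\times n$ one; the rest is careful bookkeeping. The only genuinely delicate point is the sign tracking that exchanges $r_+$ and $r_-$ when passing between the matrix in the statement and the actual Schur complement $-BG_l^{-1}B^T$. Beyond this, the argument is simply the natural inductive generalization of the proof of Lemma \ref{m=3}, to which it specializes when $l=2$: in that case $B=[G_{12},G_{13}]$, the Schur complement is $-(A+A^T)$ with $A=G_{12}G_{32}^{-1}G_{31}$, and one recovers exactly the formula stated there.
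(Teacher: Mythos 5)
Your Schur-complement argument is correct and is the natural inductive generalization of the proof of Lemma~\ref{m=3}: that proof constructs the $g$-orthogonal complement of the subspace carrying $G_l$ and computes the restriction of $g$ to it, which is precisely the block-diagonalization by congruence that you write out, so the two are the same computation in different clothing. The paper states Lemma~\ref{signature} without proof, referring the reader back to Lemma~\ref{m=3}, so yours fills that gap faithfully, and the inertia bookkeeping (the exchange of $r_+$ and $r_-$ coming from the sign of the Schur complement) is right. One caveat worth flagging: the $n\times n$ matrix as printed in the lemma's statement, $\sum_i G^l_{ij}D^2_{j,l+1}c$, does not parse as written --- $j$ is a free index and the left factor is missing. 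The object you in fact compute, $BG_l^{-1}B^T=\sum_{i,j}G_{l+1,i}\,G^l_{ij}\,G_{j,l+1}$, is the intended one; it specializes correctly to the well-formed $A+A^T=G_{12}G_{32}^{-1}G_{31}+G_{13}G_{23}^{-1}G_{21}$ of Lemma~\ref{m=3} when $l=2$. So you are not so much ``absorbing the $a_{ij}$ in a normalization'' as correcting a typo in the statement, and it would be cleaner to say so explicitly.
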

For an algorithm to calculate the signature in the general case, start with the lower right hand two by two block, which has signature $(n,n,0)$.  Use Lemma~\ref{signature}, or equivalently Lemma~\ref{m=3} to find the signature of the lower right hand three by three block.  Then use Lemma~\ref{signature} again to determine the signature of the lower right hand four by four block and so on.  After $m-1$ applications of Lemma~\ref{signature} we obtain the signature of $g$.

\section{Applications to the two marginal problem}
Together with McCann and Warren, we proved in \cite{MPW} that any solution to the two marginal problem was supported on an $n$-dimensional Lipschitz submanifold, provided the marginals both live on smooth $n$-dimensional manifolds and the cost is non-degenerate; that is, $D^2_{x_1x_2}c(x_1,x_2)$ seen as a map from $T_{x_1}M_1$ to $T^*_{x_2}M_2$ is injective.  Kim and McCann noted that in this case, the signature of $\overline{g}$ is $(n,n,0)$ \cite{KM}, so Theorem~\ref{metric} immediately implies this result.  In fact, our analysis here is applicable to a larger class of two marginal problems.

Unfortunately, the topology of many important manifolds prohibits the non-degeneracy condition from holding everywhere.  Suppose, for example, that $M_1=M_2=S^1$, the unit circle. Then periodicity in $x_1$ of $\frac{\partial c}{\partial x_2}(x_1,x_2)$ implies
\begin{equation*}
 \int_{S^1}\frac{\partial^2 c}{\partial x_1 \partial x_2}(x_1,x_2)dx_1=0.
\end{equation*}
 It follows that for every $x_2$ there is at least one $x_1$ such that $\frac{\partial^2 c}{\partial x_1 \partial x_2}(x_1,x_2)=0$.  In \cite{MPW}, we noted that under certain conditions the set where non-degeneracy fails is at most $2n-1$ dimensional, which yields an immediate upper bound on the dimension of $spt(\gamma)$.  We now use Theorem~\ref{metric} to derive a better bound.  To this end, suppose that we have two $n$ dimensional manifolds and the non-degeneracy condition fails at some point $(x_1,x_2)$.  If $r$ is the rank of the map $D^{2}_{x_1x_2}c(x_1,x_2)$, then the signature of $\overline{g}$ at $(x_1,x_2)$ is $(r,r,2n-2r)$.   We conclude that locally $spt(\gamma)$ is at most $2n-r$ dimensional.  A global lower bound on $r$ immediately yields an upper bound for the dimension of $spt(\gamma)$

Next we consider a two marginal problem where the dimensions of the spaces fail to coincide; suppose the two manifolds $M_1$ and $M_2$ have dimensions $n_1$ and $n_2$ respectively, where $n_2 \leq n_1$.  Again, let $r$ be the rank of $D^{2}_{x_1x_2}c$; then $\overline{g}$ has signature $(r,r,n_1+n_2-2r)$. If $D^{2}_{x_1x_2}c$ has full rank, ie, if $r=n_2$ then this reduces to $(n_2,n_2,n_1-n_2)$ and the solution may have as many as $n_1$ dimensions (in fact, if the support of the first marginal has Hausdorff dimension $n_1$, then the Hausdorff dimension of $spt(\gamma)$ must be exactly $n_1$).  This result has a nice heuristic explanation.  To solve the problem, one would first solve its dual problem, yielding two potential functions $u_1(x_1)$ and $u_2(x_2)$, and the solutions lies in the set where the first order condition $Du_2(x_2)=D_{x_2}c(x_1,x_2)$ is satisfied.  For a fixed $x_2$, this is a level set of the function $x_1 \mapsto D_{x_2}c(x_1,x_2)$, which is generically $n_1-n_2$ dimensional.  Fixing $x_2$ and moving along this level set corresponds exactly to moving along the null directions of $\overline{g}$.  On the other hand, as $x_2$ varies, $x_1$ must vary in such a way so that the resulting tangent vectors are timelike.  Hence, the solution may contain all the lightlike directions of $\overline{g}$, which correspond to fixing $x_2$ and varying $x_1$, plus $n_2$ timelike directions, which correspond to varying $x_2$ and with it $x_1$.

\section{The 1-dimensional case: coordinate independence and a new proof of Carlier's result}
In \cite{C}, Carlier studied a multi-marginal problem where all the measures were supported on the real line and proved that under a 2-monotonicity condition on the cost, the solution must be one dimensional.  To the best of our knowledge, this is the only result about the multi-marginal problem proved to date that deals with a general class of cost functions.  The purpose of this section is to expose the relationship between 2-monotonicity and the geometric framework developed in this paper.  We will find an invariant form of this condition and provide a new and simpler proof of Carlier's result.

We begin with a definition:

\newtheorem{s2m}{Definition}[section] 

\begin{s2m}We say $c: \mathbb{R}^m \rightarrow \mathbb{R}$ is $i,j$ strictly 2-monotone with sign $\pm 1$ and write $sgn(c)_{ij} = \pm 1$ if for all $x=(x_1,x_2,...,x_m) \in \mathbb{R}^m$ and $s,t>0$ we have

\begin{equation*}
 \pm [c(x)+c(x+te_i+se_j)] < \pm[c(x+te_i) +c(x+se_j)]
\end{equation*}
where $(e_1,e_2,...e_m)$ is the canonical basis for $\mathbb{R}^m$.
\end{s2m}

In this notation, Carlier's 2-monotonicity condition is that $sgn(c)_{ij} =-1$ for all $i \neq j$.  This is not invariant under smooth changes of coordinates, however; the change of coordinates $x_i \mapsto -x_i$ takes a cost with  $sgn(c)_{ij} =-1$ and transforms it to one with $sgn(c)_{ij} =1$.  However, it is easy to check that the following condition is coordinate independent.

\newtheorem{sgn}[s2m]{Definition}

\begin{sgn}We say $c$ is compatible if, for all distinct $i,j,k$ we have 
 \begin{equation}
 \frac{sgn(c)_{ij}sgn(c)_{jk}}{sgn(c)_{ik}} =-1. \nonumber
 \end{equation}
\end{sgn}
It is also easy to check that $c$ is compatible if and only if there exist smooth changes of coordinates $x_i \mapsto y_i=f_i(x_i)$ for $i=1,2,...,m$ which transform $c$ to a 2-monotone cost.  Combined with Carlier's result, this observation implies that compatibility is sufficient to ensure that the support of any optimizer is $1$-dimensional.

If the cost is $C^2$, the condition $\frac{d^2c}{dx_idx_j} <0$ is sufficient to ensure $sgn(c)_{ij} =-1$; likewise, $\frac{d^2c}{dx_idx_j}(\frac{d^2c}{dx_kdx_j})^{-1}  \frac{d^2c}{dx_idx_k} < 0$ ensures that $c$ is compatible.   We can think of the condition on the threefold products $D^2_{x_1x_2}c(D^2_{x_3x_2}c)^{-1}D^2_{x_3x_1}c$ in Lemma \ref{m=3} as a multi-dimensional, coordinate independent version of Carlier's condition.  Corollary \ref{nec} demonstrates that this condition is necessary for $\overline{g}$ to have signature $((m-1)n,n,0)$ and, when $m=3$, Lemma \ref{m=3} shows that it is also sufficient.  For $m >3$, however, it is not sufficient even in one dimension.  As a counterexample, consider the cost function 
 \begin{equation*}
c(x_1,x_2,x_3,x_4)=-x_1x_2 -x_1x_3 - x_1x_4 - x_2x_3 - x_2x_4 - 5x_3x_4.
 \end{equation*}

For this cost, 
\begin{equation*} \qquad
\overline{G}=-
\begin{bmatrix}
0 & 1 & 1 & 1\\
1 & 0 & 1 & 1\\
1 & 1 & 0 & 5\\
1 & 0 & 5 & 0\\
\end{bmatrix},
\end{equation*}
which has signature $(2,2,0)$

Thus, Theorem \ref{metric} implies neither Carlier's result nor the generalization above , at least if we restrict our attention to the special metric $\overline{g}$.  Below, we reconcile this by providing a new proof of Carlier's result, with the slightly stronger assumption $\frac{d^2c}{dx_idx_j} <0$ in place of 2-monotonicity.

The crux of Carlier's argument is the following result:

\newtheorem{Carlier}[s2m]{Theorem}

\begin{Carlier}
Suppose $\frac{d^2c}{dx_idx_j} <0$ for all $i \neq j$ .  Then the projections of the support of the optimizer onto the planes spanned by $x_1$ and $x_j$ are non-decreasing subsets. 
\end{Carlier}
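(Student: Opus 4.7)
The plan is to prove the stronger statement that any two points $y,\tilde y\in spt(\gamma)$ must be comparable in the componentwise partial order on $\mathbb{R}^m$; once this is known, taking $i=1$ immediately yields that the projection of $spt(\gamma)$ onto each $(x_1,x_j)$-plane is non-decreasing. I argue by contradiction, supposing $y,\tilde y\in spt(\gamma)$ are incomparable, so that $S:=\{k:y_k>\tilde y_k\}$ and $T:=\{k:y_k<\tilde y_k\}$ are both nonempty.

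The key step is to apply Lemma~\ref{mono} with the partition adapted to the disagreement pattern of $y$ and $\tilde y$: set $p_+:=\{k:y_k\leq\tilde y_k\}$ (indices of equality may be placed in either block) and $p_-:=S$. Since $S$ and $T$ are nonempty, $(p_+,p_-)\in P$, and a direct inspection of the definition of $c$-monotonicity shows that the two swapped points appearing in the inequality are precisely $z=\min(y,\tilde y)$ and $\tilde z=\max(y,\tilde y)$, taken componentwise. Lemma~\ref{mono} then yields
\[
c(y)+c(\tilde y)\;\leq\;c(\min(y,\tilde y))+c(\max(y,\tilde y)).
\]

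The complementary ingredient is strict submodularity, furnished by the hypothesis $\partial^2c/\partial x_i\partial x_j<0$ for every $i\neq j$. Grouping the coordinates indexed by $S$ and by $T$ into vector blocks, a double application of the fundamental theorem of calculus (first in the $T$-block, then in the $S$-block) expresses $c(\min)+c(\max)-c(y)-c(\tilde y)$ as a double integral of a bilinear form built from the $S\times T$ submatrix of mixed second partials of $c$, acting on two vectors whose components are the differences $y_i-\tilde y_i$ for $i\in S$ and $\tilde y_j-y_j$ for $j\in T$. By the definitions of $S$ and $T$ these vectors have strictly positive entries, and by hypothesis every entry of the matrix is strictly negative, so the integrand is strictly negative throughout the integration region. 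This gives
\[
c(\min(y,\tilde y))+c(\max(y,\tilde y))\;<\;c(y)+c(\tilde y),
\]
in direct contradiction with the inequality from the previous step.

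This contradiction forces any two points of $spt(\gamma)$ to be comparable, which proves the theorem. I do not anticipate a serious technical obstacle; the conceptual content is simply that, among the $2^{m-1}-1$ partitions available in Lemma~\ref{mono}, the one adapted to the sign pattern of $y-\tilde y$ converts $c$-monotonicity into precisely the reverse of the submodularity inequality guaranteed by the sign hypothesis on mixed partials. The only mild subtlety is the treatment of coordinates where $y_k=\tilde y_k$, which may be assigned to either block without affecting the argument.
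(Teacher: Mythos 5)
Your proof is correct and follows essentially the same route as the paper: both arguments apply Lemma~\ref{mono} to the partition determined by the sign pattern of the coordinate differences, Taylor-expand the resulting $c$-monotonicity inequality into a double integral of mixed second partials, and contradict the sign hypothesis. Your phrasing via the lattice operations $\min/\max$ (and the slightly stronger conclusion that any two points of $spt(\gamma)$ are comparable) is a modest cosmetic streamlining of the paper's reordering-of-indices argument, not a different method.
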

In view of the preceding remarks, this implies that when the cost has negative threefold products $\frac{d^2c}{dx_idx_j}(\frac{d^2c}{dx_kdx_j})^{-1}  \frac{d^2c}{dx_idx_k}$, the support is $1$-dimensional.  

Carlier's proof relies heavily on duality.  He shows that he can reduce the problem to a series of two marginal problems with costs derived from the solution to the dual problem.  He then shows that these cost inherit  monotonicity from $c$ and hence their solutions are concentrated on monotone sets.  We provide a simple proof that uses only the $c$-monotonicity of the support.  In addition, our proof does not require any compactness assumptions on the supports of the measures.  However, after establishing this result, it is not hard to show that, if the first measure is nonatomic, the support is concentrated on the graph of a function over $x_1$.

Morally, our proof applies the non-spacelike conclusion of Theorem \ref{metric} to a well chosen semi-metric; however, because we don't know a priori that the optimizer is smooth we will prove the theorem directly from $c$-monotonicity.

\begin{proof}

Suppose $(x_1,...,x_m)$ and $(y_1,...,y_m)$ belong to the support of the optimizer.  We want to show $(x_1-y_1)(x_i-y_i) \geq 0$ for all $i$.  If not, we may assume without loss of generality that for some $2 \leq k \leq m$ we have $(x_1-y_1)(x_i-y_i) \geq 0$ for all $i<k$ and $(x_1-y_1)(x_i-y_i) < 0$  for $i \geq k$.  Hence, $(x_j-y_j)(x_i-y_i) \leq 0$ for all $j<k$ and $i \geq k$.  By $c$-monotonicity, we have
\begin{equation*}
c(x_1,...,x_m) + c(y_1,...,y_m) \leq c(y_1,...,y_{k-1},x_k,...,x_m) + c(x_1,...,x_{k-1},y_k,...,y_m).
\end{equation*}
Hence,

\begin{eqnarray*}
\sum_{i=1}^{k-1}\sum_{j=k}^{m}(x_i-y_i)(x_j-y_j)\int_{0}^{1}\int_{0}^{1}\frac{d^{2}c}{dx_idx_j}(y_1(t),y_2(t),...,y_{k-1}(t),y_k(s),...,y_m(s))dtds \\
\leq 0
\end{eqnarray*}
where $y_i(t) =y_i + t(x_i-y_i)$ for $i=1,2,..k-1$ and $y_j(s) =y_j + s(x_j-y_j)$ for $j=k,k+1,...,m$.
But, as $\frac{d^{2}}{dx_idx_j}c(y_1(t),y_2(t),...,y_k-1(t),y_k(s),...,y_m(s)) < 0$, and $(x_i-y_i)(x_j-y_j) \leq 0$ for all $i<k$ and $j \geq k$, every term in the sum is nonnegative.  As $(x_1-y_1)(x_j-y_j) <0$ for $j \geq k$, the sum must be positive, a contradiction.

\end{proof}


\begin{thebibliography}{99}

\bibitem{bren} Brenier, Y. Decomposition polaire et rearrangement monotone des champs de vecteurs. \textit{C.R. Acad. Sci. Pair. Ser. I Math.}, 305, 1987, pp. 805-808.

\bibitem{Caf} Caffarelli, L., Allocation maps with general cost funtions.  In \textit{Partial Differential Equations and Applications} (P. Marcellini, G. Talenti and E. Vesintini, eds.),  Lecture Notes in Pure and Applied Math. 177 (1996), pp. 29-35. 

\bibitem{C} Carlier, G.  On a class of multidimensional optimal transportation problems. \textit{J. Convex Anal.} 10, 2003, no. 2, pp. 517-529.

\bibitem{CE} Carlier, G., and Ekeland, I.  Matching for teams.  \textit{Econom. Theory}. 42, 2010, no. 2, pp. 397-418.

\bibitem{CN} Carlier, G., and Nazaret, B.  Optimal transportation for the determinant. \textit{ESAIM Control Optim. Calc. Var.} 14, 2008, no.4, pp. 678-698.

\bibitem{CMN} Chiappori, P-A., McCann, R., and Nesheim, L.  Hedonic price equilibria, stable matching and optimal transport; equivalence, topology and uniqueness. \textit{Econom. Theory.} 42, 2010, no. 2, pp. 317-354.

\bibitem{g} Gangbo, W. \textit{Habilitation Thesis}, Universite de Metz, (1995).

\bibitem{GM} Gangbo, W. and McCann, R.  The geometry of optimal transportation. \textit{Acta Math.} 177, 1996, pp. 113-161. 

\bibitem{GS} Gangbo, W., and \'Swi\c ech, A.  Optimal maps for the multidimensional Monge-Kantorovich problem. \textit{Comm. Pure Appl. Math.} 51, 1998, no.1, pp. 23-45.

\bibitem{H} Heinich, H.  Probleme de Monge pour $n$ probabilities. \textit{C.R. Math. Acad. Sci. Paris} 334, 2002, no. 9, pp. 793-795.

\bibitem{Kant} Kantorovich, L.  On the translocation of masses. \textit{C.R. (Doklady) Acad. Sci. URSS (N.S.)}, 37, (1942), pp.199-201.

\bibitem{K} Kellerer, H.G., Duality theorems for marginal problems. \textit{Z. Wahrsch. Verw. Gebiete} 67, 1984, pp. 399-432.

\bibitem{KM} Kim, Y-H., McCann, R.  Continuity, curvature, and the general covariance of optimal transportation.  To appear in \textit{J. Eur. Math. Soc.}

\bibitem{KS} Knott, M., and Smith, C.C.  On a generalization of cyclic monotonicity and distances among random vectors. \textit{Linear Algebra Appl.} 199, 1994, pp. 363-371.

\bibitem{lev} Levin, V.  Abstract cyclical monotonicity and Monge solutions for the general Monge-Kantorovich problem.  \textit{Set-Valued Analysis}, 7, no.1, 1999, pp. 7-32.

\bibitem{m} McCann, R. Polar factorization of maps on Riemannian manifolds.  \textit{Geom. Funct. Anal.}  11, 2001, pp. 589-608.

\bibitem{MPW} McCann, R., Pass, B. and Warren, M.  Rectifiability of optimal transportation plans.  Preprint available at arXiv:1003.4556v1.

\bibitem{minty} Minty, G. J.  Monotone (nonlinear) operators in Hilbert space.  \textit{Duke Math. J.}, 29 (1962), 341-346.

\bibitem{Mon} Monge, G.  Memoire sur la theorie des deblais et de remblais.  In \textit{Histoire de l'Academie Royale des Sciences de Paris, avec les Memoires de Mathematique et de Physique pour la meme annee}, 1781, pp.666-704.

\bibitem{OR} Olkin, I. and Rachev, S.T.  Maximum submatrix traces for positive definite matrices. \textit{SIAM J. Matrix Ana. Appl.} 14, 1993, pp. 390-397.

\bibitem{P} Pass, B.  Monge solutions and uniqueness in the multi-marginal optimal transportation problem.  In preparation.

\bibitem{RU} R\"uschendorf, L. and  Uckelmann, L.  On optimal multivariate couplings.  In \textit{Proceedings of Prague 1996 conference on marginal problems} (eds. V. Benes, I. Stepan) Kluwer Acad. Publ. (1997), pp. 261-274.

\bibitem{RU2} R\"uschendorf, L. and  Uckelmann, L.  On the $n$ coupling problem.  \textit{Journal of Multivariate Analysis} 81, 2002, pp. 242-258.

\bibitem{sk} Smith, C. and Knott, M. On Hoeffding-Frechet bounds and cyclic monotone relations. \textit{J. Multivariate Anal.}, 40 (1992), 328-334.

\end{thebibliography}
\end{document}